\definecolor{darkblue}{rgb}{0,0.1,0.5}
\numberwithin{equation}{section}
\newtheorem{theorem}{Theorem}
\newtheorem{lemma}{Lemma}
\newtheorem{corollary}{Corollary}
\theoremstyle{remark}
\newcommand{\dd}{\mathrm{d}}
\newcommand{\ee}{\mathrm{e}}
\renewcommand{\leq}{\leqslant}
\renewcommand{\geq}{\geqslant}
\title[Limiting distribution of the maximal distance between random points on a circle]{Limiting distribution of the maximal distance between random points on a circle: A moments approach}%
\author{Eckhard Schlemm}%
\address{Wolfson College, University of Cambridge}
\curraddr{UCL Medical School, University College London}
\email{eckhard.schlemm@cantab.net}
\keywords{Dirichlet distribution \and Gumbel distribution \and maximum \and moments}%
\subjclass[2010]{Primary: 60G70, 62E15; Secondary: 33B15}
\begin{document}

\begin{abstract}
Motivated by the problem of computing the distribution of the largest distance $d_{\max}$ between $n$ random points on a circle we derive an explicit formula for the moments of the maximal component of a random vector following a Dirichlet distribution with concentration parameters $(1,\ldots,1)$. We use this result to give a new proof of the fact that the law of $n\,d_{\max}-\log n$ converges to a Gumbel distribution as $n$ tends to infinity.
\end{abstract}

\maketitle

\section{Introduction}
For a positive integer $n$, we denote by $X_1,\ldots,X_n$ a collection of $n$ independent, standard uniform random variables, which we interpret as locations of points on a circle with perimeter one. By $d_i$, $i=1,\ldots,n$, we denote the distances (in arc length) between adjacent points, that is $d_i=X_{i+1}-X_i$, where the index is taken modulo $n$. Alternatively, the distances $d_i$ can be interpreted as the lengths of the pieces of a randomly broken stick of length one \cite{holst1980lengths}. A detailed understanding of their properties is of importance in some aspects of non-parametric statistics \cite{wilks1962mathematical}. 
The set of distances is also interesting from a purely probabilistic point of view because the smallest, typical, and largest distances show quite markedly different behaviour as the number of points tend to infinity. It is known \cite[Problem 6.4.2]{david2003order} that the expected size of the $k$\textsuperscript{th}-largest gap, $k=1,\ldots,n$, is given by $n^{-1}\sum_{j=k}^n{1/j}$. In particular, the smallest gap $d_{\min}$ is of order $1/n^2$, whereas the largest gap $d_{\max}$ is of order $H_n/n\sim\log n/n$, where $H_n=\sum_{j=1}^n1/j$ denotes the $n$\textsuperscript{th} harmonic number, and $a_n\sim b_n$ if and only if $a_n/b_n\to	 1$. An easy calculation shows that $n^2d_{\min}$ converges in distribution to an exponential random variable with parameter one. In this short note, we are concerned with the limiting distribution of a suitably scaled and centred version of $d_{\max}$. Using the observation that the $n$-tupel $(d_i)$ of distances follows a Dirichlet distribution with parameters $(1,\ldots,1)$ it can be deduced from \cite[Corollary 3.1.]{bose2008maxima} that $n\,d_{\max}-\log n$ converges in law to a Gumbel distribution. 

In the following we provide an alternative, combinatorical proof of that result, bypassing arguments from extreme value theory in the spirit of \cite{gnedenko1943sur} and \cite{leadbetter1983extremes}; we derive, for the first time, an explicit formula for the moments of a $\operatorname{Dir}(1,\ldots,1)$ distribution and compute their limits as $n$ tends to infinity. This allows us to identify the limiting distribution in \cref{section-limit}.

\section{Computation of the moments of $d_{\max}$}
By \cite[Eq. (6.4.4)]{david2003order}, the distribution function of the largest gap is given by
\begin{equation*}
\mathbb{P}\left(d_{\max}\leq x\right) = 1 - n(1-x)^{n-1} + \binom{n}{2}(1-2x)^{n-1}	- \ldots +(-1)^k\binom{n}{k}(1-kx)^{n-1} + \ldots,
\end{equation*}
where the sum continues as long as $kx\leq 1$. In particular, after differentiating with respect to $x$ and observing that $d_{\max}\geq 1/n$, the $m$\textsuperscript{th} moment of the largest spacing is given by
\begin{align}
\begin{split}
\label{eq-moment-mn}
\mathbb{E}\left[\left(d_{\max}\right)^m\right] =& (n-1)\sum_{\nu=1}^{n-1}{ \int_{1/(\nu+1)}^{1/\nu}{\sum_{k=1}^\nu{\binom{n}{k}(-1)^{k+1} k x^m (1-kx)^{n-2}}\dd x}}.
\end{split}
\end{align}
In the following we evaluate this expression in closed form. Since the order in which the summations and integration are carried out is inconsequential, we consider the integrals in \cref{eq-moment-mn} first.
\begin{lemma}
\label{lemma-x-integral}
For positive integers $n$, $k<n$, $\nu<n$ and $m$, the following holds.
\begin{equation}
\label{eq-x-integral}
\int_{1/(\nu+1)}^{1/\nu}{x^m (1-kx)^{n-2}\dd x} = \sum_{\mu=0}^m{\frac{1}{k^\mu} \frac{m!(n-2)!}{(m-\mu)!(n+\mu-2)!}T_{n+\mu,k,\nu ,m-\mu}},
\end{equation}
where
\begin{equation}
T_{n+\mu,k,\nu,m-\mu}=\frac{1}{k(n+\mu-1)}\left(\frac{(\nu+1-k)^{n+\mu-1}}{(\nu+1)^{n+m-1}} - \frac{(\nu-k)^{n+\mu-1}}{\nu^{n+m-1}}\right).
\end{equation}
\end{lemma}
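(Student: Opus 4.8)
The plan is to evaluate the integral by repeated integration by parts, each step trading one power of $x$ for one unit in the exponent of $(1-kx)$, so that after $m$ steps the polynomial factor in $x$ has been eliminated and the result is a finite sum of boundary terms. Concretely, I would introduce the family of integrals $J_j = \int_{1/(\nu+1)}^{1/\nu} x^{m-j}(1-kx)^{n-2+j}\,\dd x$ for $0 \leq j \leq m$, so that the left-hand side of \cref{eq-x-integral} is $J_0$. Integrating by parts with antiderivative $\dd v = (1-kx)^{n-2+j}\,\dd x$ yields the recursion
\begin{equation*}
J_j = \left[-\frac{x^{m-j}(1-kx)^{n-1+j}}{k(n-1+j)}\right]_{1/(\nu+1)}^{1/\nu} + \frac{m-j}{k(n-1+j)}\,J_{j+1},
\end{equation*}
which terminates because at $j = m$ the polynomial factor is constant and $J_m$ is evaluated directly, contributing no further integral.

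Next I would unroll this recursion. The boundary term peeled off at stage $\mu$ enters $J_0$ multiplied by the accumulated prefactor $\prod_{j=0}^{\mu-1}\frac{m-j}{k(n-1+j)}$; a short computation identifies this product with $\frac{1}{k^\mu}\frac{m!}{(m-\mu)!}\frac{(n-2)!}{(n+\mu-2)!}$, which is precisely the rational coefficient appearing in \cref{eq-x-integral}. Verifying that the telescoping product of linear factors collapses to the stated ratio of factorials, and that the index ranges match so the sum runs exactly from $\mu=0$ to $\mu=m$, is the main piece of bookkeeping and the step most prone to off-by-one errors; it can be made rigorous by a straightforward induction on $\mu$ if one prefers to avoid an informal unrolling argument.

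Finally I would evaluate each boundary term at the two endpoints. Substituting $x = 1/\nu$ gives $x^{m-\mu}(1-kx)^{n+\mu-1} = (\nu-k)^{n+\mu-1}/\nu^{n+m-1}$, and $x = 1/(\nu+1)$ gives the analogous $(\nu+1-k)^{n+\mu-1}/(\nu+1)^{n+m-1}$; the exponent $n+m-1$ in the denominators arises because the $\nu^{-(m-\mu)}$ coming from $x^{m-\mu}$ combines with the $\nu^{-(n+\mu-1)}$ coming from $(1-kx)^{n+\mu-1}$, and likewise for $\nu+1$. Collecting the factor $-\frac{1}{k(n+\mu-1)}$ from the antiderivative and reversing the sign when the endpoints are written in the order $(\nu+1)$ before $\nu$, the $\mu$-th boundary contribution is exactly $T_{n+\mu,k,\nu,m-\mu}$, which completes the derivation of \cref{eq-x-integral}.
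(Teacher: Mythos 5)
Your proof is correct and takes exactly the approach the paper intends: the paper's own proof consists of the single sentence that the result follows by $m$-fold integration by parts, which is precisely the recursion you set up and unroll. Your accounting of the accumulated prefactor $\frac{1}{k^\mu}\frac{m!(n-2)!}{(m-\mu)!(n+\mu-2)!}$ and the endpoint evaluations yielding $T_{n+\mu,k,\nu,m-\mu}$ checks out, so your write-up simply supplies the details the paper omits.
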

\begin{proof}
The result is obtained by $m$-fold integration by parts.
\end{proof}
After changing the order of summations, this result can be used to perform the $\nu$-sum in \cref{eq-moment-mn}.
\begin{lemma}
\label{lemma-nu-sum}
For positive integers $n$, $k<n$ and $m$, the following holds.
\begin{equation}
\sum_{\nu=k}^{n-1}{\int_{1/(\nu+1)}^{1/\nu}{x^m (1-kx)^{n-2}\dd x}} = \sum_{\mu=0}^m{\frac{1}{k^{\mu +1}}\frac{m!(n-2)!}{(m-\mu)!(n+\mu-1)!}\frac{(n-k)^{n+\mu-1}}{n^{n+m-1}}}.
\end{equation}
\end{lemma}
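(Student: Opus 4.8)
The plan is to substitute the closed form from \cref{lemma-x-integral} into the left-hand side, interchange the two finite summations, and then exploit the fact that the quantity $T_{n+\mu,k,\nu,m-\mu}$ is, as a function of $\nu$, a telescoping difference. First I would insert \cref{eq-x-integral} to obtain
\[
\sum_{\nu=k}^{n-1}\int_{1/(\nu+1)}^{1/\nu} x^m (1-kx)^{n-2}\,\dd x = \sum_{\nu=k}^{n-1}\sum_{\mu=0}^m \frac{1}{k^\mu}\frac{m!(n-2)!}{(m-\mu)!(n+\mu-2)!}\,T_{n+\mu,k,\nu,m-\mu}.
\]
Since both sums are finite, I would freely swap their order, bringing the $\mu$-sum outside; for fixed $\mu$ the only $\nu$-dependence then sits in $T_{n+\mu,k,\nu,m-\mu}$.

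The key observation is that, writing $f(\nu)=(\nu-k)^{n+\mu-1}/\nu^{n+m-1}$, the definition of $T$ gives
\[
T_{n+\mu,k,\nu,m-\mu} = \frac{1}{k(n+\mu-1)}\bigl(f(\nu+1)-f(\nu)\bigr),
\]
so that $\sum_{\nu=k}^{n-1} T_{n+\mu,k,\nu,m-\mu}$ is a telescoping sum collapsing to $\frac{1}{k(n+\mu-1)}\bigl(f(n)-f(k)\bigr)$. Here the lower boundary term vanishes, since $f(k) = (k-k)^{n+\mu-1}/k^{n+m-1} = 0$ because the exponent $n+\mu-1\geq 1$ (using $n\geq 2$). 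Hence the inner $\nu$-sum equals $\frac{1}{k(n+\mu-1)}\cdot\frac{(n-k)^{n+\mu-1}}{n^{n+m-1}}$.

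Finally I would reassemble and simplify the combinatorial prefactor: combining $k^{-\mu}\cdot k^{-1}=k^{-(\mu+1)}$ with $\frac{(n-2)!}{(n+\mu-2)!}\cdot\frac{1}{n+\mu-1}=\frac{(n-2)!}{(n+\mu-1)!}$ turns the expression into the claimed right-hand side term by term in $\mu$.

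I do not anticipate a genuine obstacle here, as the entire argument is an exact, finite manipulation. The one point deserving care is verifying that the lower telescoping endpoint drops out; this hinges on the summation starting precisely at $\nu=k$ rather than at $\nu=1$, which is exactly the range produced when one interchanges the $k$- and $\nu$-summations in \cref{eq-moment-mn}. Highlighting this structural feature is, I expect, the only subtlety worth spelling out.
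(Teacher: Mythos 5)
Your proposal is correct and follows essentially the same route as the paper's (one-line) proof: substitute \cref{eq-x-integral}, swap the finite sums, and collapse the $\nu$-sum by telescoping, with the lower endpoint vanishing because $(k-k)^{n+\mu-1}=0$. You have merely spelled out the details the paper leaves implicit (including correctly identifying that the telescoping runs over $\nu$, despite the paper's proof loosely saying ``the sum over $k$''), so there is nothing to add.
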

\begin{proof}
After plugging in \cref{eq-x-integral} and interchanging the order of summation the sum over $k$ is seen to be telescoping, which gives the result.
\end{proof}

We also need the following binomial identities whose easy proofs are left to the reader.
\begin{lemma}
\label{lemma-binom-ident}
For positive integers $n$, $m$ and $s\leq m$, the following identities hold.
\begin{align}
\label{eq-binom-ident1}\sum_{k=1}^n{\binom{n}{k}(-1)^{k+1}\frac{k}{n+1-k}}=&(-1)^{n+1};\\
\label{eq-binom-ident2}\sum_{k=1}^n{k^s(-1)^{k+1} \binom{n}{k}} =& 0;\\
\label{eq-binom-ident3}\sum_{\mu=s}^m{\frac{(-1)^\mu}{(m-\mu)!(n+\mu-1)!}\binom{n+\mu-1}{\mu-s}}=&\delta_{s,m}\frac{(-1)^m}{(n+m-1)!}.
\end{align}
\end{lemma}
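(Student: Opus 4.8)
The plan is to prove the three identities separately, since they are essentially independent; the middle one, \eqref{eq-binom-ident2}, is the engine that drives the first.

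I would begin with \eqref{eq-binom-ident2}. The natural device is to read the alternating binomial sum as an $n$-th forward difference. Since the summand vanishes at $k=0$ for $s\geq1$, I may extend the range to include $k=0$ at no cost and write
\[
\sum_{k=1}^n k^s(-1)^{k+1}\binom{n}{k} = -\sum_{k=0}^n (-1)^k \binom{n}{k} k^s = -(-1)^n (\Delta^n f)(0),\qquad f(x)=x^s,
\]
where $\Delta$ denotes the forward-difference operator. Because $x^s$ is a polynomial of degree $s$, its $n$-th difference vanishes as soon as $s<n$; equivalently one may invoke $\sum_{k=0}^n(-1)^k\binom{n}{k}k^s=(-1)^n n!\,S(s,n)$ with the Stirling number $S(s,n)=0$ for $s<n$. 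The only point demanding attention is the range of validity: the right-hand side is genuinely $0$ precisely when $s<n$, which is the regime in which \cref{lemma-binom-ident} is subsequently applied.

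For \eqref{eq-binom-ident1}, I would first clear the denominator by the elementary rewriting $\binom{n}{k}\frac{1}{n+1-k}=\frac{1}{n+1}\binom{n+1}{k}$, which is immediate from $(n-k)!\,(n+1-k)=(n+1-k)!$. This turns the left-hand side into $\frac{1}{n+1}\sum_{k=1}^n k(-1)^{k+1}\binom{n+1}{k}$. The full sum from $k=0$ to $k=n+1$ is the $s=1$ instance of \eqref{eq-binom-ident2} with $n+1$ in place of $n$, hence vanishes, so the truncated sum equals minus its missing top term $k=n+1$; that term works out to $(n+1)(-1)^{n}$, and after negating and dividing by $n+1$ one is left with $(-1)^{n+1}$. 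The only real work here is correctly isolating and signing the boundary term.

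For \eqref{eq-binom-ident3}, I would expand $\binom{n+\mu-1}{\mu-s}=\frac{(n+\mu-1)!}{(\mu-s)!\,(n+s-1)!}$ and cancel the factor $(n+\mu-1)!$ against the one already in the denominator. Pulling out $1/(n+s-1)!$, the summand collapses to $(-1)^\mu/[(m-\mu)!\,(\mu-s)!]$; substituting $j=\mu-s$ and extracting $(-1)^s$ reduces the remaining sum to $\frac{1}{(m-s)!}\sum_{j=0}^{m-s}(-1)^j\binom{m-s}{j}=\frac{1}{(m-s)!}(1-1)^{m-s}$, which is $0^{m-s}$ and thus vanishes unless $s=m$. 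Collecting constants in the surviving case $s=m$ reproduces $(-1)^m/(n+m-1)!$, i.e. the claimed Kronecker delta. I do not anticipate any genuine obstacle: these are routine, and the only care required is respecting $s<n$ in the first two identities so that \eqref{eq-binom-ident2} is the vanishing difference rather than a nonzero Stirling contribution.
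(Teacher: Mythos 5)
Your three arguments are all correct, and there is nothing in the paper to compare them against: the paper gives no proof of \cref{lemma-binom-ident}, stating only that the ``easy proofs are left to the reader.'' Your choices are the natural ones and all the details check out, including the signs: the finite-difference (equivalently, Stirling-number) argument for \eqref{eq-binom-ident2}; the absorption identity $\binom{n}{k}\frac{1}{n+1-k}=\frac{1}{n+1}\binom{n+1}{k}$ followed by isolating the $k=n+1$ boundary term for \eqref{eq-binom-ident1}; and the cancellation of $(n+\mu-1)!$ reducing \eqref{eq-binom-ident3} to $(1-1)^{m-s}$ via the binomial theorem. One point in your write-up is genuinely valuable beyond verification: you note that \eqref{eq-binom-ident2} is false as literally stated when $s\geq n$ (e.g.\ $n=1$, $s=1$ gives $1$, not $0$; in general the sum equals $\pm n!\,S(s,n)\neq 0$ for $s=n$), so the identity carries the implicit hypothesis $s<n$. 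That is indeed the regime in which the paper uses it --- in the proof of \cref{theorem-moments-finite} the relevant exponents satisfy $s\leq n-1$, and the identity serves to evaluate the sum truncated at $k=n-1$ as minus the $k=n$ term --- and making the hypothesis $s<n$ explicit would be a worthwhile correction to the statement of the lemma.
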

The following result records a link between raw moments and cumulants of a random variable and is used repeatedly in the sequel. 
\begin{lemma}
\label{lemma-partition-coefficient}
For a positive integer $m$ and real numbers $x_1,\ldots,x_m$, the quantity
\begin{equation}
\label{eq-partition-gen}
\sum_{\stackrel{r_1+2r_2+\ldots+m r_m=m}{r_i\in\mathbb{N}_0}}{\frac{m!}{r_1!1^{r_1}\cdots r_m!m^{r_m}}x_1^{r_1}\cdots x_m^{r_m}}
\end{equation}
can be interpreted as $m![y^m]\exp\left\{\sum_{r=1}^m{x_r y^r/r}\right\}$, where $[y^m]f(y)$ denotes the coefficient of $y^m$ in the formal power series $f(y)$. In particular, the $m$\textsuperscript{th} cumulant of a random variable with $m$\textsuperscript{th} raw moment given by \cref{eq-partition-gen} is equal to $\kappa_m=(m-1)!\,x_m$.
\end{lemma}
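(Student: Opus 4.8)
The plan is to treat both assertions through the single formal power series $\Phi(y)=\exp\{\sum_{r\ge 1}x_r y^r/r\}$ and to read off coefficients. For the first claim I would expand the exponential as a product of exponentials, one for each index $i$, using $\exp\{x_i y^i/i\}=\sum_{r_i\ge 0}(x_i y^i/i)^{r_i}/r_i!=\sum_{r_i\ge 0}x_i^{r_i}y^{i r_i}/(r_i!\,i^{r_i})$. Multiplying these series over all $i$, a generic term is indexed by a tuple $(r_1,r_2,\ldots)$ and carries the monomial $y^{\sum_i i r_i}\prod_i x_i^{r_i}/(r_i!\,i^{r_i})$. Collecting the coefficient of $y^m$ therefore restricts the sum to tuples with $\sum_i i\,r_i=m$, i.e. $r_1+2r_2+\cdots+m r_m=m$ with all larger indices forced to zero, and multiplying by $m!$ reproduces exactly the sum in \cref{eq-partition-gen}. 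I would remark that terms with $i>m$ never contribute to $[y^m]$, which is precisely why truncating the inner sum at $r=m$ in the statement is harmless.

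For the cumulant claim I would read the hypothesis as saying that the moment generating function $M(y)=\sum_{m\ge 0}\mathbb{E}[Y^m]y^m/m!$ coincides, as a formal power series, with $\Phi(y)$, since the first part shows $\mathbb{E}[Y^m]/m!=[y^m]\Phi(y)$ for every $m$. Taking logarithms, the cumulant generating function is $K(y)=\log M(y)=\sum_{r\ge 1}x_r y^r/r$. Comparing this with the defining expansion $K(y)=\sum_{r\ge 1}\kappa_r y^r/r!$ and matching coefficients of $y^m$ yields $\kappa_m/m!=x_m/m$, that is $\kappa_m=(m-1)!\,x_m$.

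There is no deep obstacle here; the argument is a direct application of the exponential formula relating moments and cumulants, and the two displays above are the whole content. The only points deserving care are formal rather than analytic: I would work throughout with formal power series in $y$, so that no convergence of $M$ or $K$ is required, and I would note that the clean identity $\kappa_m=(m-1)!\,x_m$ tacitly presupposes that the raw moments of \emph{all} orders (not merely the $m$th) are given by \cref{eq-partition-gen} with one common sequence $(x_i)$, so that $\log M(y)$ indeed has the stated closed form. Granting this, extraction of the $m$th cumulant is immediate.
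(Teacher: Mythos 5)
Your proof is correct and takes essentially the same route as the paper: the multinomial expansion of $\exp\left\{\sum_{r}x_r y^r/r\right\}$ establishes the first claim, and identifying the cumulant generating function $K(y)=\log M(y)=\sum_{r} x_r y^r/r$ gives $\kappa_m=(m-1)!\,x_m$. You simply spell out the coefficient extraction that the paper dismisses as ``rearranging terms,'' and you match coefficients of $y^m$ rather than differentiating $K$ at zero, which is an equivalent formal operation.
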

\begin{proof}
The first claim is proved by rearranging terms. It implies that the moment generating function of a random variable with raw moments $\mu_m'$ given by \cref{eq-partition-gen} is
\begin{equation*}
M(t) = 1 + \mu'_1t+\frac{\mu'_2}{2!}t^2+\frac{\mu'_3}{3!}t^3+\ldots=\exp\left\{\sum_{r=1}^\infty{\frac{x_r}{r}t^r}\right\}.
\end{equation*}
Its cumulants are thus easily computed from the cumulant generating function $K(t)=\log M(t)$ as
\begin{equation*}
\kappa_{m} = \left.\frac{\dd ^m}{\dd t^m}K(t)\right|_{t=0}=\left.\frac{\dd ^m}{\dd t^m}\sum_{r=1}^\infty{\frac{x_r}{r}y^r}\right|_{t=0}=(m-1)!\,x_m.\qedhere
\end{equation*}
\end{proof}

For the statement of the following auxiliary result, which we could not find proved in the literature, we introduce the notation
\begin{equation*}
\mathcal{H}_{n,s}\coloneqq\frac{1}{s!}\sum_{\mathbf{r}(s)}{\binom{s}{\mathbf{r}}H_{n,1}^{r_1}\cdots H_{n,s}^{r_s}}\coloneqq \sum_{\stackrel{r_1+2r_2+\ldots+s r_s=s}{r_1\in\mathbb{N}_0}}{\frac{1}{r_1!1^{r_1}\cdots r_s!s^{r_s}}H_{n,1}^{r_1}\cdots H_{n,s}^{r_s}},
\end{equation*}
where $\sum_{\mathbf{r}(s)}$ denotes a sum over integer partitions of $s$ and $H_{n,r}=\sum_{j=1}^n{1/j^r}$ is the $n$\textsuperscript{th} harmonic number of order $r$. The numbers $\mathcal{H}_{n,s}$, whose expression is intimately related to the Bell polynomials \cite{bell1927partition}, figure prominently in our expression for the moments of $d_{\max}$.
\begin{lemma}
\label{lemma-partition}
For any positive integers $n$ and $s$ the following holds.
\begin{equation}
\label{eq-partition}
\sum_{k=1}^n{k^{-s}\binom{n}{k}(-1)^{k+1}} = \mathcal{H}_{n,s}.
\end{equation}
\end{lemma}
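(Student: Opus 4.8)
The plan is to pass to generating functions and reduce the identity to a single partial-fraction decomposition. First I would apply \cref{lemma-partition-coefficient} with $m=s$ and $x_r=H_{n,r}$: since $\mathcal{H}_{n,s}$ is exactly the partition sum in \cref{eq-partition-gen} divided by $s!$, it can be read off as a coefficient,
\[
\mathcal{H}_{n,s}=[y^s]\exp\left\{\sum_{r=1}^{s}\frac{H_{n,r}}{r}y^r\right\}=[y^s]\exp\left\{\sum_{r=1}^{\infty}\frac{H_{n,r}}{r}y^r\right\},
\]
the two coefficients agreeing because terms with $r>s$ contribute only to coefficients of $y^r$ with $r>s$.

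Next I would evaluate the exponent in closed form. Interchanging the order of summation and recognising the logarithmic series,
\[
\sum_{r=1}^{\infty}\frac{H_{n,r}}{r}y^r=\sum_{j=1}^{n}\sum_{r=1}^{\infty}\frac{1}{r}\left(\frac{y}{j}\right)^r=-\sum_{j=1}^{n}\log\left(1-\frac{y}{j}\right),
\]
so that the generating function of the right-hand side of \cref{eq-partition} is the rational function
\[
F(y)\coloneqq\exp\left\{\sum_{r=1}^{\infty}\frac{H_{n,r}}{r}y^r\right\}=\prod_{j=1}^{n}\frac{1}{1-y/j}=\frac{n!}{\prod_{j=1}^{n}(j-y)},
\]
understood as a formal power series in $y$.

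On the other side, summing geometric series shows that the generating function of the left-hand side of \cref{eq-partition} is
\[
\sum_{s=0}^{\infty}\left(\sum_{k=1}^{n}k^{-s}\binom{n}{k}(-1)^{k+1}\right)y^s=\sum_{k=1}^{n}\binom{n}{k}(-1)^{k+1}\frac{1}{1-y/k}.
\]
Hence \cref{eq-partition} would follow, coefficient by coefficient in $y^s$, once I establish the partial-fraction identity
\[
\frac{n!}{\prod_{j=1}^{n}(j-y)}=\sum_{k=1}^{n}(-1)^{k+1}\binom{n}{k}\frac{k}{k-y}.
\]

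The crux is thus this decomposition, which I would prove by the standard residue computation. Since $F$ has simple poles at $y=1,\ldots,n$, the coefficient $c_k$ in $F(y)=\sum_{k=1}^{n}c_k/(k-y)$ equals $\lim_{y\to k}(k-y)F(y)=n!/\prod_{j\neq k}(j-k)$; evaluating $\prod_{j\neq k}(j-k)=(-1)^{k-1}(k-1)!(n-k)!$ turns this into $(-1)^{k+1}k\binom{n}{k}$, which are exactly the numerators above. Matching the coefficient of $y^s$ on the two generating functions then gives $\mathcal{H}_{n,s}=\sum_{k=1}^{n}k^{-s}\binom{n}{k}(-1)^{k+1}$. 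I expect the only real work to be this residue evaluation; all the power-series manipulations can be carried out purely formally, so that no questions of convergence intervene.
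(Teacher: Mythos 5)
Your proof is correct, and it takes a genuinely different route from the paper's. Both arguments start identically, using \cref{lemma-partition-coefficient} to identify $\mathcal{H}_{n,s}$ with $[y^s]\exp\left\{\sum_{r=1}^{s}H_{n,r}y^r/r\right\}$, but they diverge immediately afterwards. The paper proceeds by induction on $n$: the generating function for $n+1$ points is the one for $n$ points times the geometric factor $(n+1)/(n+1-y)$, the coefficients are convolved, and the extra factor is absorbed using the binomial identity \labelcref{eq-binom-ident1}. You instead sum the exponent in closed form and recognise the generating function as the rational function $n!/\prod_{j=1}^{n}(j-y)$, after which the identity drops out of its partial fraction decomposition in one stroke. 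Your residue computation is right: $\prod_{j\neq k}(j-k)=(-1)^{k-1}(k-1)!\,(n-k)!$ gives the coefficients $(-1)^{k+1}k\binom{n}{k}$, the expansion into simple-pole terms is justified because the function is a proper rational function with distinct simple poles, and the power-series manipulations (truncating the exponent at degree $s$, formal logarithm and exponential, termwise geometric series) are all legitimate, so there is no gap. What your route buys is a non-inductive proof that dispenses with \labelcref{eq-binom-ident1} altogether and makes the structure visible — the alternating binomial weights are precisely the residues of $n!/\prod_{j}(j-y)$, and incidentally $\mathcal{H}_{n,s}$ is revealed as the complete homogeneous symmetric polynomial of degree $s$ in $1,1/2,\ldots,1/n$. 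What the paper's induction buys is self-containedness: it stays entirely within the elementary coefficient identities already established and never needs the partial-fraction step.
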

\begin{proof}
The proof proceeds by induction on $n$. By \cref{lemma-partition-coefficient} it suffices to show that the left side of \cref{eq-partition} equals the coefficient of $y^s$ in $\exp\left\{\sum_{r=1}^s{H_{n,r}y^r/r}\right\}$. The claim is true for $n=1$, when both sides equal one. For the induction step we assume the validity of the statement up to $n$ and compute
\begin{align*}
[y^s]\exp\left[\sum_{r=1}^s{\frac{H_{n+1,r}}{r}y^r}\right] =& [y^s]\exp\left\{\sum_{r=1}^s{\frac{H_{n,r}}{r}y^r}+\sum_{r=1}^\infty{\frac{1}{r}\left(\frac{y}{n+1}\right)^r}\right\}\\
  =& \sum_{\sigma=0}^s\left([y^\sigma]\exp\left\{\sum_{r=1}^s{\frac{H_{n,r}}{r}y^r}\right\}\right)\left([y^{s-\sigma}]\frac{n+1}{n+1-y}\right)\\
  =& \sum_{\sigma=0}^s{\sum_{k=1}^n{k^{-\sigma}\binom{n}{k}(-1)^{k+1}}\frac{1}{(n+1)^{s-\sigma}}}\\
  =& \sum_{k=1}^n{\binom{n}{k}(-1)^{k+1}\left[k^{-s}\frac{n+1}{n+1-k}-(n+1)^{-s}\frac{k}{n+1-k}\right]}\\
  =&\sum_{k=1}^{n+1}{k^{-s}\binom{n+1}{k}(-1)^{k+1}}.\qedhere
\end{align*}
To obtain the last line we have used the identity \labelcref{eq-binom-ident1}.
\end{proof}

The main result of this section can now be proved.
\begin{theorem}
\label{theorem-moments-finite}
For positive integers $n$ and $m$, the $m$\textsuperscript{th} moment of $d_{\max}$ is given by
\begin{equation}
\mathbb{E}\left[\left(d_{\max}\right)^m\right] = \frac{(n-1)!}{(n+m-1!)}\sum_{\stackrel{r_1+2r_2+\ldots+m r_m=m}{r_i\in\mathbb{N}_0}}{\frac{m!}{r_1!\,1^{r_1}\,r_2!\,2^{r_2}\cdots r_m!\,m^{r_m}} H_{n,1}^{r_1}H_{n,2}^{r_2}\cdots H_{n,m}^{r_m}}.
\end{equation}
\end{theorem}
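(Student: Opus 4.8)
The plan is to begin from the explicit moment formula \cref{eq-moment-mn} and carry out the three summations (over $\nu$, over $k$, and over the auxiliary index $\mu$ introduced in \cref{lemma-x-integral}) one at a time, reducing everything to the partition sum $\mathcal{H}_{n,m}$ of \cref{lemma-partition}. First I would interchange the order of the $k$- and $\nu$-summations in \cref{eq-moment-mn}: the summation region is $1\leq k\leq\nu\leq n-1$, so for fixed $k$ the index $\nu$ runs from $k$ to $n-1$, which is exactly the shape required by \cref{lemma-nu-sum}. Substituting that lemma and simplifying the elementary prefactors ($(n-1)(n-2)!=(n-1)!$ and $k\cdot k^{-(\mu+1)}=k^{-\mu}$) yields
\begin{equation*}
\mathbb{E}\left[\left(d_{\max}\right)^m\right]=\frac{m!(n-1)!}{n^{n+m-1}}\sum_{k=1}^{n-1}\binom{n}{k}(-1)^{k+1}\sum_{\mu=0}^m\frac{k^{-\mu}(n-k)^{n+\mu-1}}{(m-\mu)!(n+\mu-1)!}.
\end{equation*}

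Next I would bring the remaining $k$-dependence into a form on which the identities of \cref{lemma-binom-ident} and \cref{lemma-partition} can act. Since $(n-k)^{n+\mu-1}$ vanishes at $k=n$ (for $n\geq2$), the $k$-sum may be extended to run up to $n$, and then $(n-k)^{n+\mu-1}$ is expanded by the binomial theorem. After interchanging the order of summation, the inner sum over $k$ becomes a linear combination of the elementary sums $\sum_{k=1}^n\binom{n}{k}(-1)^{k+1}k^{t}$, where $t$ ranges from $-\mu$ up to $n-1$. Here the key facts enter: for $1\leq t\leq n-1$ the sum vanishes by \cref{eq-binom-ident2}, the term $t=0$ contributes $1$, and for $t=-\sigma$ with $1\leq\sigma\leq\mu$ the sum equals $\mathcal{H}_{n,\sigma}$ by \cref{lemma-partition}. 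Collecting the surviving terms and using the symmetry $\binom{n+\mu-1}{n-1+\sigma}=\binom{n+\mu-1}{\mu-\sigma}$ rewrites the double sum as
\begin{equation*}
\mathbb{E}\left[\left(d_{\max}\right)^m\right]=\frac{m!(n-1)!}{n^{n+m-1}}\sum_{\sigma=0}^m n^{n-1+\sigma}\mathcal{H}_{n,\sigma}\sum_{\mu=\sigma}^m\frac{(-1)^{\mu-\sigma}}{(m-\mu)!(n+\mu-1)!}\binom{n+\mu-1}{\mu-\sigma},
\end{equation*}
where the $\sigma=0$ contribution is read as $\mathcal{H}_{n,0}=1$.

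Finally I would collapse the inner $\mu$-sum. Pulling out the sign $(-1)^{\mu-\sigma}=(-1)^\sigma(-1)^\mu$, the $\mu$-summation is precisely the left-hand side of \cref{eq-binom-ident3} with $s=\sigma$, so it equals $(-1)^\sigma\delta_{\sigma,m}(-1)^m/(n+m-1)!=\delta_{\sigma,m}/(n+m-1)!$. Only the term $\sigma=m$ survives, the power $n^{n-1+m}$ cancels the factor $n^{-(n+m-1)}$ in the prefactor, and one is left with $\mathbb{E}[(d_{\max})^m]=\frac{m!(n-1)!}{(n+m-1)!}\mathcal{H}_{n,m}$; substituting the definition of $\mathcal{H}_{n,m}$ gives the claimed partition sum. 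The main obstacle I anticipate is the bookkeeping in the middle step: correctly reindexing the binomial expansion of $(n-k)^{n+\mu-1}$, tracking the signs, and recognising which powers $k^{t}$ are annihilated by \cref{eq-binom-ident2} and which reproduce $\mathcal{H}_{n,\sigma}$ via \cref{lemma-partition}. Once the inner sum is expressed through the $\mathcal{H}_{n,\sigma}$, the identity \cref{eq-binom-ident3} does the decisive work of forcing $\sigma=m$, and the remainder is a cancellation of factorials and powers of $n$.
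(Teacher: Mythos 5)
Your proposal is correct and follows essentially the same route as the paper's proof: interchange the $k$- and $\nu$-summations, apply \cref{lemma-nu-sum}, expand $(n-k)^{n+\mu-1}$ by the binomial theorem, evaluate the resulting $k$-sums via \cref{lemma-partition} and \cref{eq-binom-ident2}, and collapse the $\mu$-sum with \cref{eq-binom-ident3}. Your one refinement---extending the $k$-sum to $k=n$ \emph{before} expanding, so that every inner sum runs up to $n$ and matches the lemmas exactly, with the exponent-zero term treated explicitly (it contributes $\mathcal{H}_{n,0}=1$, which \cref{eq-binom-ident3} with $s=0$ then annihilates)---is in fact tidier than the paper's bookkeeping, which keeps the sum at $n-1$ and must track and cancel the correction terms coming from the absent $k=n$ term.
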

\begin{proof}
Combining \cref{eq-moment-mn,lemma-nu-sum} we can write
\begin{align*}
\mathbb{E}\left[\left(d_{\max}\right)^m\right] =& (n-1)\sum_{k=1}^{n-1}{\binom{n}{k}(-1)^{k+1}\sum_{\mu=0}^m{\frac{1}{k^\mu}\frac{m!(n-2)!}{(m-\mu)!(n+\mu-1)!}\frac{(n-k)^{n+\mu-1}}{n^{n+m-1}}}}.
\end{align*}
We use the binomial theorem to expand $(n-k)^{n+\mu-1}$ and change the order of summation to obtain
\begin{align*}
 \mathbb{E}\left[\left(d_{\max}\right)^m\right] =&\frac{(n-1)!}{n^m} \sum_{\mu =0}^m {\frac{m!}{(m-\mu )! (n+\mu-1)!}\sum _{s=0}^{n+\mu-1} (-1)^s n^{\mu-s} \binom{n+\mu -1}{s}\sum _{k=1}^{n-1}{k^{s-\mu}\binom{n}{k}(-1)^{k+1}}}.
\end{align*}
Splitting the sum according to the sign of the exponent $s-\mu$ of $k$, adjusting the summation index $s$, and using \cref{lemma-partition} as well as \cref{eq-binom-ident2}, we obtain
\begin{align*}
\mathbb{E}\left[\left(d_{\max}\right)^m\right] =& \frac{(n-1)!}{n^m} \sum_{\mu =0}^m \frac{m!}{(m-\mu )! (n+\mu-1)!}\left[\sum _{s=1}^\mu (-1)^{\mu-s} n^s \binom{n+\mu-1}{\mu-s}\sum _{k=1}^{n-1}{k^{-s}\binom{n}{k}(-1)^{k+1}}+\right.\\
  &\quad\left.+\sum _{s=0}^{n-1} (-1)^{\mu+s} n^{-s} \binom{n+\mu -1}{\mu+s}\sum _{k=1}^{n-1}{k^s(-1)^{k+1} \binom{n}{k}}\right]\\
   =&\frac{(n-1)!}{n^m} \sum_{\mu =0}^m \frac{m!}{(m-\mu )! (n+\mu-1)!}\left[(-1)^\mu\sum _{s=1}^\mu (-1)^s n^s \binom{n+\mu-1}{\mu-s}\mathcal{H}_{n,s}+\right.\\
  &\quad\left.+(-1)^{n+\mu}\sum _{s=1}^\mu (-1)^s \binom{n+\mu-1}{\mu-s}+(-1)^{n+\mu}\sum _{s=0}^{n-1} (-1)^s \binom{n+\mu -1}{\mu+s}\right].
\end{align*}
The last two terms are equal to $\pm(n+\mu-2)!/[(n-1)!(\mu-1)!]$ and cancel each other. Interchanging the order of summation and using \cref{eq-binom-ident3} we finally obtain
\begin{equation*}
\mathbb{E}\left[\left(d_{\max}\right)^m\right] = \frac{(n-1)!}{n^m} \sum_{s=1}^m{\delta_{s,m}\frac{m!(-1)^m}{(n+m-1)!}(-1)^s}n^s\mathcal{H}_{n,s}=\frac{(n-1)!m!}{(n+m-1)!}\mathcal{H}_{n,m}.\qedhere
\end{equation*}
\end{proof}


\section{Identification of the limiting distribution}
\label{section-limit}
In this section we leave the finite setting and explore the asymptotic behaviour of the largest spacing $d_{\max}$. We use $o(1)$ to denote a term that converges to zero as $n$ tends to infinity. The first lemma shows that simply rescaling $d_{\max}$ by the inverse of its expected size does not lead to an interesting limit.
\begin{lemma}
The sequence $\frac{n}{\log n} d_{\max}$ converges to one in distribution.
\end{lemma}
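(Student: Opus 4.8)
The plan is to read off the moments of the rescaled variable $Y_n \coloneqq \frac{n}{\log n}\,d_{\max}$ directly from \cref{theorem-moments-finite} and show that each of them converges to $1$, which is the $m$\textsuperscript{th} moment of the constant random variable $1$. Since a unit point mass at $1$ is compactly supported, it is determined by its moments, so convergence of all moments forces convergence in distribution to it, which is the assertion.

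First I would record the two asymptotic facts that drive the computation: the first-order harmonic number satisfies $H_n = H_{n,1} \sim \log n$, whereas for every fixed $r \geq 2$ the higher-order harmonic number $H_{n,r}$ increases to the finite value $\zeta(r)$ and is therefore bounded in $n$. Inserting these into the definition of $\mathcal{H}_{n,m}$, I would argue that among all integer partitions $r_1 + 2r_2 + \cdots + m r_m = m$ the dominant contribution as $n \to \infty$ comes from the partition $r_1 = m$, $r_2 = \cdots = r_m = 0$, which contributes $H_{n,1}^m/m! = H_n^m/m!$. Every other partition carries a strictly smaller power of $H_n$: since $r_1 = m-1$ would force $2r_2 + \cdots + mr_m = 1$, which is impossible, one has $r_1 \leq m-2$ in all remaining terms, while the accompanying factors $H_{n,r}^{r_r}$ with $r \geq 2$ stay bounded. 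Consequently $\mathcal{H}_{n,m} = \frac{H_n^m}{m!}\bigl(1 + O(H_n^{-2})\bigr) \sim \frac{H_n^m}{m!}$.

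Combining this with the elementary ratio estimate $\frac{(n-1)!\,m!}{(n+m-1)!} = \frac{m!}{n(n+1)\cdots(n+m-1)} \sim \frac{m!}{n^m}$ for the prefactor in \cref{theorem-moments-finite}, I obtain $\mathbb{E}\bigl[(d_{\max})^m\bigr] \sim (H_n/n)^m$, whence
\[
\mathbb{E}\left[Y_n^m\right] = \left(\frac{n}{\log n}\right)^m \mathbb{E}\bigl[(d_{\max})^m\bigr] \sim \left(\frac{H_n}{\log n}\right)^m \longrightarrow 1,
\]
because $H_n/\log n \to 1$. This establishes the convergence of all moments and completes the proof. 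A slightly more economical variant uses only the first two moments: the same estimates give $\mathbb{E}[Y_n] \to 1$ and $\mathbb{E}[Y_n^2] \to 1$, so $\operatorname{Var}(Y_n) \to 0$, and Chebyshev's inequality then yields convergence of $Y_n$ to $1$ in probability, hence in distribution.

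The main obstacle is the asymptotic analysis of the partition sum $\mathcal{H}_{n,m}$, that is, making precise that the single partition $r_1 = m$ dominates and that the remainder is genuinely of lower order uniformly over the finitely many remaining partitions; everything else reduces to the boundedness of $H_{n,r}$ for $r \geq 2$ and the factorial ratio estimate, both of which are routine.
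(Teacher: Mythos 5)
Your proof is correct and takes essentially the same route as the paper: the paper's own proof simply observes from \cref{theorem-moments-finite} that all moments of $\frac{n}{\log n}\,d_{\max}$ converge to one and invokes the fact that the Dirac mass at one is the only measure with all moments equal to one, while your argument supplies the partition asymptotics ($\mathcal{H}_{n,m}\sim H_n^m/m!$, dominant partition $r_1=m$) that the paper leaves implicit. The Chebyshev variant you sketch at the end is a valid and even more elementary alternative, but your main argument is the paper's method-of-moments proof with the details filled in.
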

\begin{proof}
Using \cref{theorem-moments-finite} one sees that all moments converge to one. Since the Dirac mass at one is the only measure with all moments equal to one, the claim follows.
\end{proof}
Since $\mathbb{E}\left[\frac{n}{\log n} d_{\max}-1\right]=\left[\gamma+o(1)\right]/\log n$, it is natural to consider $\log n\left(\frac{n}{\log n}d_{\max}-1\right)$ next. This scaling turns out to be correct.
\begin{theorem}
\label{theorem-moments-limit}
The moments of the centred and scaled maximal distance $n\,d_{\max}-\log n$  satisfy
\begin{equation}
\label{eq-moments-limit}
\mathbb{E}\left[\left(n\,d_{\max}-\log n\right)^m\right]\xrightarrow{n\to\infty}\mu_m\coloneqq\sum_{\mathbf{r}(m)}{\binom{m}{\mathbf{r}} \gamma^{r_1}\zeta(2)^{r_2}\cdots \zeta(m)^{r_m}},
\end{equation}
where $\gamma$ is the Euler--Mascheroni constant and $\zeta(\cdot)$ denotes the Riemann $\zeta$-function. In particular, the $m$\textsuperscript{th} cumulant of $n\,d_{\max}-\log n$ converges to $\gamma$ for $m=1$, and to $(m-1)!\,\zeta(m)$ for $m\geq 2$.
\end{theorem}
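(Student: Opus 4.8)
The plan is to start from the closed form of \cref{theorem-moments-finite}, namely $\mathbb{E}\left[\left(d_{\max}\right)^j\right]=\frac{(n-1)!\,j!}{(n+j-1)!}\mathcal{H}_{n,j}$, and to expand the centred power by the binomial theorem. Writing $L\coloneqq\log n$ this gives
\begin{equation*}
\mathbb{E}\left[\left(n\,d_{\max}-L\right)^m\right]=\sum_{j=0}^m\binom{m}{j}(-L)^{m-j}n^j\,\mathbb{E}\left[\left(d_{\max}\right)^j\right]=\sum_{j=0}^m\binom{m}{j}(-L)^{m-j}c_{n,j}\,j!\,\mathcal{H}_{n,j},
\end{equation*}
where $c_{n,j}=n^j(n-1)!/(n+j-1)!=\prod_{i=1}^{j-1}(1+i/n)^{-1}$ satisfies $c_{n,j}=1+O(1/n)$ for each fixed $j$. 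Thus the logarithm of $n$ enters only through the explicit centring and through the harmonic numbers hidden inside $\mathcal{H}_{n,j}$.

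The \emph{decisive} observation is that this centring is precisely tuned to cancel the logarithmic growth of $\mathcal{H}_{n,j}$. By \cref{lemma-partition-coefficient} the numbers $\mathcal{H}_{n,j}$ are the Taylor coefficients of $\exp\left\{\sum_{r\geq1}H_{n,r}t^r/r\right\}$, so after replacing $c_{n,j}$ by $1$ the $j$-sum collapses to a single coefficient extraction,
\begin{equation*}
\sum_{j=0}^m\binom{m}{j}(-L)^{m-j}j!\,\mathcal{H}_{n,j}=m!\,[t^m]\exp\left\{-Lt+\sum_{r\geq1}H_{n,r}\frac{t^r}{r}\right\}=m!\,[t^m]\exp\left\{\gamma_n t+\sum_{r\geq2}H_{n,r}\frac{t^r}{r}\right\},
\end{equation*}
where $\gamma_n\coloneqq H_{n,1}-\log n$ and the factor $\ee^{-Lt}$ has absorbed exactly the leading term $H_{n,1}t$. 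Applying \cref{lemma-partition-coefficient} once more, now with $x_1=\gamma_n$ and $x_r=H_{n,r}$ for $r\geq2$, identifies this coefficient with the finite partition sum $\sum_{\mathbf{r}(m)}\binom{m}{\mathbf{r}}\gamma_n^{r_1}H_{n,2}^{r_2}\cdots H_{n,m}^{r_m}$, in which $H_{n,1}$ no longer appears.

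It then remains to pass to the limit. Since $\gamma_n=H_{n,1}-\log n\to\gamma$ and $H_{n,r}\to\zeta(r)$ for $r\geq2$, every monomial of this finite sum converges to the corresponding monomial of $\mu_m=\sum_{\mathbf{r}(m)}\binom{m}{\mathbf{r}}\gamma^{r_1}\zeta(2)^{r_2}\cdots\zeta(m)^{r_m}$. The only remaining piece is the replacement of $c_{n,j}$ by $1$: the induced error is $\sum_{j=0}^m\binom{m}{j}(-L)^{m-j}(c_{n,j}-1)j!\,\mathcal{H}_{n,j}$, and using $c_{n,j}-1=O(1/n)$ together with the crude bound $\mathcal{H}_{n,j}=O((\log n)^j)$ (each $H_{n,r}\leq H_{n,1}=O(\log n)$ and each partition of $j$ has at most $j$ parts) each summand is $O((\log n)^m/n)$ and hence vanishes. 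I expect this bookkeeping, controlling the subleading factor $c_{n,j}$ and the harmonic asymptotics while the individual summands themselves diverge like powers of $\log n$, to be the main technical point; the conceptual heart is the exact cancellation $\ee^{-Lt}\exp\{H_{n,1}t\}=\exp\{\gamma_n t\}$ in the generating function.

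Finally, the cumulant statement is immediate from \cref{lemma-partition-coefficient}: the limit $\mu_m$ is of the form \labelcref{eq-partition-gen} with $x_1=\gamma$ and $x_r=\zeta(r)$ for $r\geq2$, so the limiting $m$\textsuperscript{th} cumulant equals $(m-1)!\,x_m$, that is $\gamma$ for $m=1$ and $(m-1)!\,\zeta(m)$ for $m\geq2$. Equivalently, $\sum_{r\geq1}x_r t^r/r=\gamma t+\sum_{r\geq2}\zeta(r)t^r/r=\log\Gamma(1-t)$, whence $\mu_m=m!\,[t^m]\Gamma(1-t)$, exhibiting the limiting moment generating function as that of a Gumbel law.
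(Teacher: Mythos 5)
Your proof is correct, and its skeleton coincides with the paper's: binomial expansion of $\left(n\,d_{\max}-\log n\right)^m$, the closed form of \cref{theorem-moments-finite}, the asymptotics $n^j(n-1)!/(n+j-1)!\to 1$, and \cref{lemma-partition-coefficient} as the key combinatorial tool. The difference is one of language and of rigour. The paper phrases the cancellation of $\log n$ probabilistically: it introduces an auxiliary variable $\tilde d_{\max}$ with raw moments $m!\,\mathcal{H}_{n,m}$, computes its cumulants via \cref{lemma-partition-coefficient}, and invokes shift-equivariance of the first cumulant and shift-invariance of the higher ones, so that centring turns $\kappa_{n,1}=H_{n,1}$ into $H_{n,1}-\log n\to\gamma$ while leaving $\kappa_{n,m}=(m-1)!H_{n,m}\to(m-1)!\,\zeta(m)$ untouched; convergence of cumulants then gives convergence of moments. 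Your identity $\ee^{-t\log n}\exp\left\{H_{n,1}t+\cdots\right\}=\exp\left\{\gamma_n t+\cdots\right\}$ is exactly this shift argument, executed at the level of formal power series, and it delivers the limit moments directly as finite partition sums in $\gamma_n, H_{n,2},\ldots,H_{n,m}$, with no detour through an auxiliary random variable (whose existence the paper does not actually establish). Your version also patches a genuine informality in the paper: replacing $c_{n,j}=n^j(n-1)!/(n+j-1)!$ by $1$ inside a sum whose individual terms grow like powers of $\log n$ requires justification, which your estimate $\binom{m}{j}(\log n)^{m-j}\,|c_{n,j}-1|\,j!\,\mathcal{H}_{n,j}=O\left((\log n)^m/n\right)$ supplies, whereas the paper merely asserts that ``in the limit'' the sum can be reinterpreted. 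The closing observation that $\mu_m=m!\,[t^m]\Gamma(1-t)$ is a nice bonus: it identifies the limiting moment generating function as that of a standard Gumbel law and thereby anticipates the corollary.
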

\begin{proof}
The binomial theorem implies that the left side of \cref{eq-moments-limit} can be written as
\begin{equation*}
\sum_{k=0}^m{(\log n)^{m-k}n^k\binom{m}{k}(-1)^{m-k}\frac{(n-1)!}{(n+k-1)!}\sum_{\mathbf{r}}{\binom{k}{\mathbf{r}}H_{n,1}^{r_1}\cdots H_{n,k}^{r_k}}}.
\end{equation*}
Since $(n-1)!/(n+k-1)!\sim n^{-k}$, it follows that, in the limit, this sum can be interpreted as the $m$\textsuperscript{th} moment of $\tilde d_{\max}-\log n$, where $\tilde d_{\max}$ is a random variable with raw moments $\sum_{\mathbf{r}}{\binom{m}{\mathbf{r}}H_{n,1}^{r_1}\cdots H_{n,m}^{r_m}}$. For such a random variable, however, the cumulants are easily computed using \cref{lemma-partition-coefficient}, namely
\begin{equation*}
\kappa_{n,1}=H_{1,n}=\log n+\gamma +o(1),\quad\text{and}\quad \kappa_{n,m}=(m-1)!H_{n,m}=(m-1)![\zeta(m)+o(1)],\quad m\geq 2.
\end{equation*}
Using the fact that the first cumulant is shift-equivariant and that higher cumulants are shift-invariant we find that the cumulants of $\tilde d_{\max}-\log n$, and thus also the cumulants of $n\,d_{\max}-\log n$, converge to
\begin{equation*}
\kappa_1=\gamma,\quad\text{and}\quad \kappa_m=(m-1)!\,\zeta(m),\quad m\geq 2.
\end{equation*}
This convergence of cumulants implies convergence of moments to the limit given in the statement of the theorem because each moment is a continuous function of finitely many cumulants.
\end{proof}
We can now use this information to identify the limiting distribution of the maximal spacing between random points on a circle.
\begin{corollary}
The rescaled and centred maximal distance $n\,d_{\max}-\log n$ converges in law to a standard Gumbel distribution with location parameter zero and scale one.
\end{corollary}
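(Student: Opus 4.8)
The plan is to invoke the method of moments. \cref{theorem-moments-limit} already delivers the complete sequence of limiting moments (equivalently, limiting cumulants) of $n\,d_{\max}-\log n$, so the only remaining tasks are to recognise these as the moments of the standard Gumbel law and to certify that such an identification is legitimate.

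First I would recall that the standard Gumbel distribution, with distribution function $\exp\left(-\ee^{-x}\right)$, has moment generating function $M(t)=\Gamma(1-t)$ for $t<1$, and hence cumulant generating function $K(t)=\log\Gamma(1-t)$. Using the classical Taylor expansion
\[
\log\Gamma(1-t)=\gamma t+\sum_{m=2}^\infty\frac{\zeta(m)}{m}t^m,\qquad |t|<1,
\]
and comparing with $K(t)=\sum_{m\geq1}\kappa_m t^m/m!$, I would read off the cumulants of the Gumbel law as $\kappa_1=\gamma$ and $\kappa_m=(m-1)!\,\zeta(m)$ for $m\geq2$. These coincide exactly with the limiting cumulants recorded in \cref{theorem-moments-limit}; consequently the limiting moments $\mu_m$ of \cref{eq-moments-limit} are precisely the moments of the standard Gumbel distribution.

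Next I would verify that the Gumbel law is determined by its moments, so that the convergence of moments established in \cref{theorem-moments-limit} upgrades to weak convergence. The cleanest route is to observe that $M(t)=\Gamma(1-t)$ is finite on the open interval $(-\infty,1)$, and in particular on a symmetric neighbourhood of the origin; a distribution whose moment generating function is finite near zero is uniquely determined by its moments (equivalently, its moment sequence satisfies Carleman's condition). The Fr\'echet--Shohat theorem then guarantees that a sequence of laws whose moments converge to those of such a determinate distribution converges to it weakly.

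Combining these observations finishes the argument: the moments of $n\,d_{\max}-\log n$ converge to the moments $\mu_m$ of the standard Gumbel distribution, and the latter is moment-determinate, whence $n\,d_{\max}-\log n$ converges in law to a standard Gumbel random variable. I expect the only genuinely delicate point to be the determinacy verification; everything else reduces to matching two explicit cumulant sequences term by term.
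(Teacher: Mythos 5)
Your proposal is correct, and its overall skeleton (method of moments applied to the limits from \cref{theorem-moments-limit}) matches the paper's; the genuine difference lies in how the two key ingredients are handled. Where the paper simply asserts as ``well known'' that the limiting cumulants are those of a standard Gumbel distribution, you prove it, via the expansion $\log\Gamma(1-t)=\gamma t+\sum_{m\geq 2}\zeta(m)t^m/m$ of the Gumbel cumulant generating function --- a welcome addition that makes the identification self-contained. More substantially, your verification of moment-determinacy is different: the paper works directly with the explicit limiting moments $\mu_m$, bounding the number of partitions of $2m$ by $\exp\{c\sqrt{2m}\}$ (citing Apostol) and each factor $\gamma,\zeta(r)$ by $2$, to check the growth condition $\limsup_m \mu_{2m}^{1/2m}/(2m)<\infty$ required by Durrett's Theorem 3.3.11; you instead observe that the Gumbel moment generating function $\Gamma(1-t)$ is finite on $(-\infty,1)$, hence in a neighbourhood of the origin, which is a standard sufficient condition for the limit law to be determined by its moments, and then invoke Fr\'echet--Shohat. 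Both routes are valid. Yours is cleaner and avoids the combinatorial estimate entirely, at the cost of needing to know the closed form of the Gumbel MGF; the paper's is more in keeping with its elementary, combinatorial spirit, since it never uses any analytic property of the limit law beyond its moment sequence. One small imprecision in your write-up: finiteness of the MGF near zero and Carleman's condition are not \emph{equivalent} (the former implies the latter, not conversely), but since you only use the implication in the correct direction, this does not affect the argument.
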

\begin{proof}
It is well known that the limiting cumulants of $n\,d_{\max}-\log n$ obtained in \cref{theorem-moments-limit} are exactly those of a standard Gumbel distribution. It thus suffices to show that these cumulants, or equivalently the moments $(\mu_m)$ determine a unique probability distribution. This, however, follows from \cite[Theorem 3.3.11.]{durret2010probability} if it can be shown that the sequence $(\mu_{2m}^{1/2m}/2m)$ remains bounded. Following \cite{apostol1976introduction} to bound the number of partitions of $2m$ by $\exp\left\{c\sqrt{2m}\right\}$, where $c=2\sqrt{\zeta(2)}$, and further bounding $\gamma<2$, $\zeta(r)<2$, it follows that
\begin{equation*}
\frac{1}{2m}\mu_{2m}^{1/2m}<\frac{1}{2m}\left(\exp\left\{c\sqrt{2m}\right\}(2m)!2^{2m}\right)^{1/2m}\xrightarrow{m\to\infty}\frac{2}{\ee }<\infty.
\end{equation*}
This completes the proof of the corollary.
\end{proof}

\bibliographystyle{plainnat}

\begin{thebibliography}{9}
\providecommand{\natexlab}[1]{#1}
\providecommand{\url}[1]{\texttt{#1}}
\expandafter\ifx\csname urlstyle\endcsname\relax
  \providecommand{\doi}[1]{doi: #1}\else
  \providecommand{\doi}{doi: \begingroup \urlstyle{rm}\Url}\fi

\bibitem[Apostol(1976)]{apostol1976introduction}
T.~M. Apostol.
\newblock \emph{Introduction to analytic number theory}.
\newblock Springer-Verlag, New York, 1976.

\bibitem[Bell(1927/28)]{bell1927partition}
E.~T. Bell.
\newblock Partition polynomials.
\newblock \emph{Ann. of Math. (2)}, 29\penalty0 (1-4):\penalty0 38--46,
  1927/28.

\bibitem[Bose et~al.(2008)Bose, Dasgupta, and Maulik]{bose2008maxima}
A.~Bose, A.~Dasgupta, and K.~Maulik.
\newblock Maxima of {D}irichlet and triangular arrays of gamma variables.
\newblock \emph{Statist. Probab. Lett.}, 78\penalty0 (16):\penalty0 2811--2820,
  2008.

\bibitem[David and Nagaraja(2003)]{david2003order}
H.~A. David and H.~N. Nagaraja.
\newblock \emph{Order statistics}.
\newblock Wiley Series in Probability and Statistics. Wiley-Interscience [John
  Wiley \& Sons], Hoboken, NJ, third edition, 2003.

\bibitem[Durrett(2010)]{durret2010probability}
R.~Durrett.
\newblock \emph{Probability: theory and examples}.
\newblock Cambridge Series in Statistical and Probabilistic Mathematics.
  Cambridge University Press, Cambridge, fourth edition, 2010.

\bibitem[Gnedenko(1943)]{gnedenko1943sur}
B.~Gnedenko.
\newblock Sur la distribution limite du terme maximum d'une s\'erie
  al\'eatoire.
\newblock \emph{Ann. of Math. (2)}, 44:\penalty0 423--453, 1943.

\bibitem[Holst(1980)]{holst1980lengths}
L.~Holst.
\newblock On the lengths of the pieces of a stick broken at random.
\newblock \emph{J. Appl. Probab.}, 17\penalty0 (3):\penalty0 623--634, 1980.

\bibitem[Leadbetter et~al.(1983)Leadbetter, Lindgren, and
  Rootz{\'e}n]{leadbetter1983extremes}
M.~R. Leadbetter, G.~Lindgren, and H.~Rootz{\'e}n.
\newblock \emph{Extremes and related properties of random sequences and
  processes}.
\newblock Springer Series in Statistics. Springer-Verlag, New York, 1983.

\bibitem[Wilks(1962)]{wilks1962mathematical}
S.~S. Wilks.
\newblock \emph{Mathematical statistics}.
\newblock A Wiley Publication in Mathematical Statistics. John Wiley \& Sons
  Inc., New York, 1962.

\end{thebibliography}

\end{document}